\newtheorem{thm}{Theorem}[section]
\newtheorem{prop}[thm]{Proposition}
\newtheorem{lemma}[thm]{Lemma}
\newtheorem{cor}[thm]{Corollary}
\newtheorem{defi}[thm]{Definition}
\newtheorem{rem}[thm]{Remark}
\begin{document}
\title[Minimal volume of uniform visibility manifolds]
{Minimal volume of complete uniform visibility manifolds with finite volume}

\author{Sungwoon Kim}
\address{School of Mathematics,
KIAS, Heogiro 85, Dongdaemun-gu, Seoul, 130-722, Republic of Korea}
\email{sungwoon@kias.re.kr}

\footnotetext[1]{2000 {\sl{Mathematics Subject Classification.}}
53C23, 20F67}
\footnotetext[2]{{\sl{Key words and phrases.}}
Minimal volume, Simplicial volume, Visibility manifold}

\begin{abstract}
We show that complete uniform visibility manifolds of finite volume with sectional curvature $-1 \leq K \leq 0$ have positive simplicial volumes. This implies that their minimal volumes are non-zero.
\end{abstract}

\maketitle

\section{Introduction}
The minimal volume of a smooth manifold $M$ is defined as the lower bound of the total volumes of all complete Riemannian metrics on $M$ whose sectional curvatures are bounded in absolute terms by one. Gromov \cite{Gr82} introduces the notion of the minimal volume and proves that the minimal volume is bounded from below by the simplicial volume, which is a type of topological invariant.
In the same paper, the question was naturally raised as to which manifolds have non-zero simplicial volumes.
Gromov conjectures that non-positively curved closed manifolds with negative
Ricci curvature have positive simplicial volumes.

First, it is verified by Gromov \cite{Gr82} and Thurston \cite{Th78} that complete Riemannian manifolds of finite volume with pinched negative sectional curvature have positive simplicial volumes.
Subsequently, research has focused on the simplicial volume of the locally symmetric spaces of non-compact type in an effort to explore the simplicial volume of Riemannian manifolds with non-positive sectional curvature.
It is proved by Lafont and Schmidt \cite{LS06} that
the simplicial volume of closed locally symmetric spaces of non-compact type is positive.
Also, closed visibility manifolds with non-positive sectional curvature, another type of manifold in the category of non-positively curved manifolds, have positive simplicial volumes.

Unlike the closed manifolds discussed above, the simplicial volume of non-compact Riemannian manifolds of finite volume is somewhat odd. For locally symmetric spaces of non-compact type, L\"{o}h and Sauer \cite{LS09-2} show that
the simplicial volume of locally symmetric spaces with $\mathbb{Q}$-rank of at least $3$ vanishes. On the other hand, it is verified that $\mathbb{Q}$-rank $1$ locally symmetric spaces covered by the product of $\mathbb{R}$-rank $1$ symmetric spaces have positive simplicial volumes \cite{KK11}, \cite{LS09-1}. The other $\mathbb{Q}$-rank $2$ case remains open.

In the case of non-compact visibility manifolds of finite volume, little is known about their simplicial volume and  minimal volume. The aim of this paper is to verify the positivity of the simplicial volume of non-compact uniform visibility manifolds with finite volume.

\begin{thm}\label{mainthm}
Let $M$ be a complete uniform visibility manifold of finite volume with sectional curvature $-1\leq K_M \leq 0$.
Then, the simplicial volume of $M$ is strictly positive.
\end{thm}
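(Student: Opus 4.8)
The plan is to realise a positive multiple of the volume of $M$ as the pairing of a \emph{bounded} cohomology class with the fundamental class, after first taming the ends. Since $|K_M|\le 1$ yields uniformly bounded geometry, I would begin with the thick–thin decomposition $M=M_{\mathrm{thick}}\cup M_{\mathrm{thin}}$, with $M_{\mathrm{thick}}$ compact and each component of $M_{\mathrm{thin}}$ an end neighbourhood. The Margulis lemma then shows that the fundamental group of each thin component is generated by short loops and is virtually nilpotent, hence amenable. Truncating the ends produces a compact core $(\bar M,\partial\bar M)$ whose boundary components have amenable fundamental groups, and Gromov's theorem on amenable boundaries identifies the locally finite simplicial volume with the relative one, $\|M\|=\|\bar M,\partial\bar M\|$. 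This reduction is what legitimises working with an honest (relative) fundamental cycle and a degree argument, so it suffices to prove that this finite invariant is positive.

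Next I would set up geodesic straightening on the Hadamard manifold $X=\tilde M$: the operator $\mathrm{str}$ sends a singular simplex to the geodesic simplex on the same ordered vertices, is $\pi_1(M)$-equivariant, chain-homotopic to the identity, and carries a fundamental cycle to another one. Pulling back the Riemannian volume form $\omega_{\mathrm{vol}}$ along the straightening gives a cocycle whose pairing with the fundamental class computes $\mathrm{Vol}(M)$. The entire argument then hinges on a single uniform estimate: that every geodesic $n$-simplex in $X$ satisfies $\mathrm{vol}(\mathrm{str}(\sigma))\le V_0$ for a constant $V_0=V_0(n)$. Granting this, for any fundamental cycle $z=\sum_i a_i\sigma_i$ the straightened cycle $\mathrm{str}(z)$ again represents the fundamental class, so
\[
\mathrm{Vol}(M)=\int_{\mathrm{str}(z)}\omega_{\mathrm{vol}}=\sum_i a_i\int_{\mathrm{str}(\sigma_i)}\omega_{\mathrm{vol}}\le V_0\sum_i|a_i|,
\]
and taking the infimum over $z$ gives $\mathrm{Vol}(M)\le V_0\,\|M\|$. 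As $\mathrm{Vol}(M)>0$, positivity of the simplicial volume follows immediately.

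The hard part will be exactly the boundedness of the volume cocycle, i.e.\ the uniform bound $V_0$ on volumes of geodesic simplices. Under a mere upper bound $K_M\le 0$ this is false: inside a flat subspace one finds Euclidean geodesic simplices of arbitrarily large volume, so the real obstruction is the presence of flats, and this is precisely where uniform visibility must be used decisively. I would exploit the fact that a uniform visibility Hadamard manifold is Gromov hyperbolic, so that geodesic simplices are uniformly thin; thinness forces a straight simplex with long edges to pinch along those edges. I would then combine this pinching with the lower curvature bound $K_M\ge -1$, via Günther's volume comparison, to show that the volume accumulating along each long edge is summably small, in the same spirit as the bound $\mathrm{vol}\le v_n$ for geodesic simplices in $\bh^n$. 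The output is a dimension-only bound $V_0$ depending on the visibility constants, replacing the strict negative upper bound used in the pinched negative case. Making this ``pinching beats neighbourhood growth'' estimate quantitative and uniform over \emph{all} simplices, independently of the non-cocompact behaviour near the cusps, is the main technical obstacle of the proof; both curvature bounds together with uniform visibility are used precisely to keep the constant uniform.
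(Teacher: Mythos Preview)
Your route via geodesic straightening and a uniform volume bound on straight simplices is genuinely different from the paper's. The paper never estimates volumes of geodesic simplices; instead it identifies the comparison map $H^n_b(V,\partial V)\to H^n(V,\partial V)$ with the group-theoretic comparison map for the relatively hyperbolic pair $(\Gamma,\mathcal P)$ and invokes the Mineyev--Yaman surjectivity theorem, together with Eberlein's tameness theorem and the Dahmani--Yaman bounded-geometry criterion to obtain virtual nilpotence of the cusp groups.

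The step you correctly flag as the crux, a uniform bound $\mathrm{vol}(\mathrm{str}(\sigma))\le V_0$ for geodesic $n$-simplices in a uniformly visible Hadamard manifold with $-1\le K\le 0$, does not follow from the ingredients you propose, and in fact fails in that generality. Gromov hyperbolicity only says each side of a triangle lies in a $\delta$-neighbourhood of the other two; it gives no \emph{decay} of the width along a long edge, and it is precisely exponential decay (coming from a strict negative upper curvature bound) that makes the volume integral converge in the pinched case. Concretely: cut $\bh^2$ along a complete geodesic and glue in a Euclidean strip $[0,w]\times\br$; after smoothing one obtains a Hadamard surface with $-1\le K\le 0$ that is quasi-isometric to $\bh^2$, hence uniformly visible, yet the geodesic triangles with vertices $(0,0),(w,0),(w/2,L)$ lie in the flat strip and have area $wL/2\to\infty$. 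So ``$\delta$-thinness plus G\"unther comparison'' cannot yield $V_0$, and since your straightening bound uses only the geometry of $X$ and not the lattice, it would have to cover this example too. A secondary point: the thick--thin/Margulis picture you invoke in the first paragraph is the pinched-negative one; with only $K\le 0$ the cusp structure is not automatic, and the paper obtains it from Eberlein's theorem rather than from the Margulis lemma.
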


Let $\text{dim}M=n$.
The curvature condition $-1\leq K_M \leq 0$ gives the lower bound on the Ricci curvature of $M$, that is,  $\text{Ricci}_M \geq -(n-1)$. This guarantees the estimate of the minimal volume of $M$ given by Gromov \cite{Gr82}, as follows :
$$\| M \| \leq (n-1)^n  n! \cdot \text{Minvol}(M)$$
Hence, we obtain the following corollary immediately :

\begin{cor}
The minimal volume of complete uniform visibility manifolds of finite volume with sectional curvature
 $-1\leq K \leq 0$ is positive.
\end{cor}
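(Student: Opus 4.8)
The plan is to combine a geodesic straightening argument with the duality between simplicial volume and bounded cohomology, after first absorbing the non-compactness by truncating the manifold along its cusps. Write $M=\tilde M/\Gamma$, where $\tilde M$ is the universal cover, a Hadamard manifold satisfying the uniform visibility condition, and $\Gamma=\pi_1(M)$ acts by deck isometries. Since $-1\le K_M\le 0$ supplies both a lower and an upper curvature bound, $\tilde M$ has a unique geodesic between any two points, so the geodesic (iterated coning) straightening $\operatorname{str}$ of singular simplices is well defined and $\Gamma$-equivariant, and therefore descends to $M$.

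First I would fix the thick--thin decomposition of $M$. Using $K\ge -1$ and a Margulis-type lemma valid for uniform visibility manifolds, the thin part is a finite union of cusp neighborhoods whose cross-sections have virtually nilpotent --- hence \emph{amenable} --- fundamental groups; the visibility hypothesis, by excluding higher-rank flats, is precisely what rules out the non-amenable cusp structure responsible for the vanishing of simplicial volume in the higher $\bq$-rank locally symmetric case. Truncating the cusps gives a compact core $(M_0,\partial M_0)$ with amenable boundary, and the vanishing of bounded cohomology of amenable groups then yields the reduction $\|M\|=\|M_0,\partial M_0\|$ --- the locally finite simplicial volume equals the relative simplicial volume of the core --- through the long exact sequence of the pair, as in the work of Gromov and of L\"oh--Sauer and Kim--Kim. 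It thus suffices to show $\|M_0,\partial M_0\|>0$.

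For this lower bound I would dualize against the Riemannian volume form $\omega$. Since $\partial M_0$ has dimension $n-1$, the form $\omega$ restricts to zero there and so represents a class in $H^n(M_0,\partial M_0)$ pairing with the relative fundamental class to give $\operatorname{Vol}(M_0)>0$. Representing this class by the straightened volume cocycle $\sigma\mapsto\int_{\Delta^n}\operatorname{str}(\sigma)^{\ast}\omega$, each value is the signed volume of a geodesic simplex and is bounded in absolute value by
$$v_n:=\sup_{\sigma}\operatorname{Vol}\bigl(\operatorname{str}(\sigma)\bigr),$$
which by the G\"unther volume comparison (here the bound $K\ge -1$ enters) is at most the volume of the regular ideal simplex in $\bh^n$, hence finite. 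The duality inequality then gives
$$\operatorname{Vol}(M_0)\le v_n\cdot\|M_0,\partial M_0\|=v_n\cdot\|M\|,$$
so $\|M\|\ge \operatorname{Vol}(M_0)/v_n>0$, proving Theorem~\ref{mainthm}; the corollary is then immediate from the quoted inequality $\|M\|\le (n-1)^n n!\cdot\operatorname{Minvol}(M)$.

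I expect the principal obstacle to be the reduction $\|M\|=\|M_0,\partial M_0\|$. Establishing it requires a Margulis lemma under only the hypotheses $-1\le K\le 0$ and uniform visibility, so as to guarantee that the cusp groups are amenable, and then enough control of the straightening near the ends that it carries locally finite relative cycles to locally finite relative cycles \emph{properly}. Once this relative framework is in place, the volume comparison and the concluding duality step proceed essentially as in the closed case.
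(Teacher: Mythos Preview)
Your overall architecture (truncate to a compact pair with amenable boundary, then show $\|M_0,\partial M_0\|>0$ by exhibiting a bounded cocycle dual to the fundamental class) is reasonable, and the reduction step you flag as the ``principal obstacle'' is in fact largely standard once one knows the cusp groups are amenable. The genuine gap is elsewhere: the boundedness of the straightened volume cocycle.

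Your claim that
\[
v_n=\sup_\sigma \operatorname{Vol}\bigl(\operatorname{str}(\sigma)\bigr)\le \operatorname{Vol}(\text{regular ideal simplex in }\bh^n)
\]
``by the G\"unther volume comparison (here the bound $K\ge -1$ enters)'' is not correct. Volume comparison from a \emph{lower} curvature bound (Bishop) bounds ball volumes from above, but it does not bound volumes of geodesic simplices from above; the inequality for simplices runs the other way. The classical bound on straight simplices (Thurston, Inoue--Yano) uses an \emph{upper} bound $K\le -1$, because more negative curvature makes simplices thinner. Under your hypotheses one may have $K=0$ on a ball of radius $R$ (and still be uniformly visible, with a large hyperbolicity constant), and a Euclidean $n$-simplex inscribed in that ball has volume of order $R^n$, far exceeding the ideal hyperbolic simplex. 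So no purely curvature-comparison argument can give $v_n<\infty$; any such bound must use the uniform visibility/Gromov hyperbolicity in an essential way, and you have not indicated how. This is exactly the hard content, and it is why the paper does \emph{not} use geodesic straightening: instead it shows (via Proposition~\ref{relhyp}) that $(\Gamma,\mathcal{P})$ is relatively hyperbolic and then invokes Mineyev--Yaman to get surjectivity of the comparison map $H^n_b(V,\partial V)\to H^n(V,\partial V)$, which produces the required bounded dual class without ever estimating volumes of straight simplices.

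A second, smaller point: the amenability of the cusp groups is not a direct Margulis-lemma consequence in this curvature range; the paper obtains virtual nilpotence (Proposition~\ref{nilpotent}) by first establishing relative hyperbolicity and then applying the Dahmani--Yaman criterion via geometric boundedness of $\tilde M$, for which both bounds $-1\le K\le 0$ are used. You should expect to need a comparable argument, not just invoke ``a Margulis-type lemma''.
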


Visibility manifolds are introduced by Eberlein and O'Neill \cite{EO73} as a generalization of strictly negative sectional curvature.
Eberlein \cite{Eb80} shows that
if $M$ is a complete uniform visibility manifold of finite volume with sectional curvature $\-1 \leq K_M \leq 0$,
then $M$ is tame; i.e., $M$ is the interior of some compact manifold with boundary.
Visibility manifolds are closely related to Gromov-hyperbolic spaces. Indeed, it turns out that the notion of uniform visibility is equivalent to the notion of Gromov-hyperbolicity. Recent works on relatively hyperbolic groups allow us to explore the simplicial volume of non-compact uniform visibility manifolds.

\section{Visibility manifold and hyperbolic space}

The notion of visibility can be generalized to $\text{CAT}(0)$-space. Eberlein and O'Neill \cite{EO73} first introduce the notion of visibility for Hadamard manifolds. Here, we recall the notion of visibility for $\text{CAT}(0)$-space in \cite{Br95}. Let $X$ be a $\text{CAT}(0)$-space. For $x,y,p \in X$, let $[x,y]$ denote the unique geodesic segment from $x$ to $y$ in $X$ and $x\widehat{p}y$ be the angle between $[p,x]$ and $[p,y]$ at $p$.

\begin{defi}
A $\text{CAT}(0)$-space $X$ is said to be locally visible if for every $p\in X$ and $\epsilon >0$, there exists $R(p,\epsilon) \geq 0$ such that if a geodesic segment $[x,y]$ lies entirely outside the ball of the radius $R(p,\epsilon)$ about p, then $x\widehat{p}y < \epsilon$. Moreover, $X$ is said to be uniformly visible if the constant $R(p,\epsilon)$ can be chosen such that it is independent of $p \in X$.
\end{defi}

A Riemannian manifold $M$ is said to be a visibility manifold if its universal cover is locally visible. Also, $M$ is said to be a uniform visibility manifold if its universal cover is uniformly visible.
It is well known that complete, simply connected Riemannian manifolds with strictly negative sectional curvature are uniformly visible.

\begin{thm}[Eberlein, \cite{Eb80}]\label{eberlein}
Let $X$ be a visibility manifold satisfying the curvature condition $-b\leq K \leq 0$. If $\Gamma$ is any non-uniform lattice in $X$, then $M=\Gamma\backslash X$ has only finitely many ends, and each end is a parabolic, Riemannian collared end. In particular, $\Gamma$ is finitely generated.
\end{thm}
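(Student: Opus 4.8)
The plan is to combine a Margulis-type thick--thin decomposition of $M$ with the visibility axiom to pin down the geometry of the cusps. Write $X(\infty)$ for the boundary at infinity of the Hadamard manifold $X$; the visibility hypothesis says that any two distinct points of $X(\infty)$ are joined by a geodesic, and a standard consequence is that every parabolic isometry of a visibility manifold fixes a \emph{unique} point of $X(\infty)$. The lower curvature bound $-b \leq K$ is exactly what I need in order to invoke the Margulis lemma: there is a constant $\mu = \mu(n,b) > 0$ so that, for every $x \in X$, the subgroup $\Gamma_\mu(x) \leq \Gamma$ generated by the $\gamma$ with $d(x,\gamma x) < \mu$ is virtually nilpotent.

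First I would form the thin part $M_{\mathrm{thin}} = \{\, p : \mathrm{inj}_M(p) < \mu/2 \,\}$ and the thick part $M_{\mathrm{thick}}$. Since $M$ has finite volume and $-b \leq K \leq 0$ gives two-sided bounded geometry, a volume-of-balls estimate forces $M_{\mathrm{thick}}$ to be compact; hence $M_{\mathrm{thin}}$ has only finitely many connected components, of which the \emph{unbounded} ones are the candidate ends and the bounded ones are Margulis tubes. This already yields the finiteness of the number of ends.

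The heart of the argument---and the step I expect to be the main obstacle---is to identify each unbounded thin component $E$ as a parabolic, collared end. Its image group $\Gamma_E$ is virtually nilpotent by the Margulis lemma, and here I would use visibility to show that $\Gamma_E$ is purely parabolic: an axial element would carry a bi-infinite axis whose quotient is a bounded Margulis tube rather than an unbounded cusp, and its two distinct endpoints in $X(\infty)$ would be incompatible with the uniqueness of fixed points forced by visibility once one tracks the nilpotent commutation relations. Thus every element of $\Gamma_E$ fixes one common point $\xi \in X(\infty)$, so $E$ is a parabolic end.

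Finally I would build the collar. The subgroup $\Gamma_E$ preserves each horoball $B$ centered at $\xi$, and the horospheres $\partial B$ are convex and mutually equidistant along the geodesics asymptotic to $\xi$; sliding inward along these geodesics gives a $\Gamma_E$-equivariant diffeomorphism exhibiting $\Gamma_E\backslash B \cong (\Gamma_E\backslash\partial B)\times[0,\infty)$ with compact cross-section. This realizes $E$ as a Riemannian collared end. Since $M$ then deformation retracts onto the compact thick part, whose boundary components are exactly these cross-sections, $M$ is the interior of a compact manifold with boundary and $\Gamma=\pi_1(M)$ is finitely generated, completing the proof.
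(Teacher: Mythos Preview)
The paper does not supply a proof of this theorem at all: it is quoted verbatim as a result of Eberlein \cite{Eb80} and used as a black box (to get tameness of $M$ and finite generation of $\Gamma$ in the proof of Proposition~\ref{relhyp}). So there is no ``paper's own proof'' to compare your proposal against.

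That said, your outline is a faithful sketch of the standard approach to Eberlein's theorem: the lower curvature bound feeds the Margulis lemma, the finite volume plus two-sided bounded geometry forces the thick part to be compact (hence finitely many ends), and visibility is used to rule out axial elements in the end groups so that each unbounded thin component is governed by a single parabolic fixed point $\xi\in X(\infty)$, after which the horoball foliation gives the Riemannian collar. The one place I would flag as needing more care is your exclusion of axial elements in $\Gamma_E$: the heuristic ``an axis would give a bounded tube, not an unbounded cusp'' is morally right, but making it precise requires controlling how a hypothetical axis sits relative to the horoballs and showing the resulting tube cannot escape to infinity---this is where Eberlein does real work, and in a write-up you would either reproduce that argument or cite it directly.
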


Theorem \ref{eberlein} specifies that a non-compact, complete visibility manifold $M$ of finite volume with sectional curvature $-1\leq K_M \leq 0$ is tame. At this point, we recall the notion of Gromov-hyperbolic space.

\begin{defi}
Given $\delta>0$, a geodesic metric space $X$ is said to be $\delta$-hyperbolic if for every geodesic triangle $\Delta \subset X$, each edge of $\Delta$ is contained in the $\delta$-neighborhood of the union of the other two sides. $X$ is said to be hyperbolic if it is $\delta$-hyperbolic for some $\delta>0$.
\end{defi}

Origin ideas of the notions of visibility and hyperbolicity come from strictly negative sectional curvature.
It turns out that the notion of uniformly visibility is equivalent to the notion of hyperbolicity under the assumption of non-positive sectional curvature.

\begin{prop}[Bridson, \cite{Br95}]\label{propBrid}
Let $X$ be a $\text{CAT}(0)$-space.
\begin{itemize}
\item[(1)] $X$ is hyperbolic if and only if $X$ is uniformly visible.
\item[(2)] If $X$ is cocompact and locally visible, then it is uniformly visible (and hence hyperbolic).
\end{itemize}
\end{prop}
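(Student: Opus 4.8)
The plan is to establish the equivalence in (1) in both directions and then deduce (2) from it together with cocompactness. Throughout I write $\angle_p(x,y)$ for the Alexandrov angle $x\widehat{p}y$, and I use freely that in a $\mathrm{CAT}(0)$-space this angle is dominated by the Euclidean comparison angle $\bar\angle_p(x,y)$, and that the comparison angle of two interior points of $[p,x]$ and $[p,y]$ decreases to $\angle_p(x,y)$ as the points approach $p$.

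First I would treat uniformly visible $\Rightarrow$ hyperbolic, which I find the cleanest. Fix $\epsilon_0\in(0,\pi/2)$, say $\epsilon_0=\pi/3$, and set $\delta=R(\epsilon_0)$. Given a geodesic triangle with vertices $x,y,z$ and a point $w$ in the interior of the side $[y,z]$, the two subsegments $[w,y]$ and $[w,z]$ together form the geodesic $[y,z]$, so $\angle_w(y,z)=\pi$, while the triangle inequality for Alexandrov angles gives $\pi=\angle_w(y,z)\le\angle_w(x,y)+\angle_w(x,z)$. If $w$ were at distance greater than $\delta$ from both $[x,y]$ and $[x,z]$, then each of these segments would lie entirely outside $B(w,R(\epsilon_0))$, so uniform visibility would force $\angle_w(x,y)<\epsilon_0$ and $\angle_w(x,z)<\epsilon_0$, whence $\pi<2\epsilon_0<\pi$, a contradiction. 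Hence every point of $[y,z]$ lies within $\delta$ of $[x,y]\cup[x,z]$; applying the same argument to the other sides shows every triangle is $\delta$-thin, so $X$ is $\delta$-hyperbolic.

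For the converse, hyperbolic $\Rightarrow$ uniformly visible, I would argue through Gromov products. Suppose $X$ is $\delta$-hyperbolic and $[x,y]$ lies outside $B(p,R)$, so $d(p,[x,y])\ge R$. The standard estimate in hyperbolic spaces gives $(x\cdot y)_p\ge d(p,[x,y])-4\delta\ge R-4\delta$, and two geodesics issuing from a common point $p$ whose Gromov product $g:=(x\cdot y)_p$ is large must $4\delta$-fellow-travel up to time $g$; in particular the points $u,v$ at distance $g/2$ from $p$ on $[p,x]$ and $[p,y]$ satisfy $d(u,v)\le 4\delta$. Comparing with the Euclidean triangle $\bar p\bar u\bar v$ (two sides of length $g/2$) yields $\sin(\bar\angle_p(u,v)/2)\le 4\delta/g$, and since $\angle_p(x,y)\le\bar\angle_p(u,v)$ this bounds the angle by $2\arcsin\!\big(4\delta/(R-4\delta)\big)$, which falls below any prescribed $\epsilon$ once $R$ is chosen large, independently of $p$. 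This is exactly uniform visibility.

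Finally, for (2) I would reduce to hyperbolicity via part (1): it suffices to show that a cocompact, locally visible $\mathrm{CAT}(0)$-space $X$ (which in the present manifold setting is proper) is hyperbolic. By the Flat Plane Theorem for proper cocompact $\mathrm{CAT}(0)$-spaces it is enough to rule out an isometrically embedded flat plane $F\cong\mathbb{E}^2$. Such a flat is convex, so its intrinsic geodesics and angles coincide with those of $X$; but $\mathbb{E}^2$ is not even locally visible — taking $p$ the origin and $x_D=(D,D)$, $y_D=(D,-D)$, the segment $[x_D,y_D]$ lies at distance $D$ from $p$ yet subtends the fixed angle $\pi/2$ — so a flat in $X$ would violate local visibility at one of its points. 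Hence $X$ contains no flat plane and is therefore hyperbolic, and uniform visibility follows from (1). The main obstacle in (2) is precisely this passage from a pointwise to a uniform visibility radius: the direct route (translate the basepoint into a compact fundamental domain and extract limiting rays by Arzel\`a--Ascoli) runs into the fact that the Alexandrov angle is only upper semicontinuous, so one controls the limiting rays but must still show the escaping segments subtend a definite angle at the limit basepoint; invoking the Flat Plane Theorem is the cleanest way to bypass this difficulty.
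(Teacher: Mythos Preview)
The paper does not give its own proof of this proposition: it is quoted from Bridson's article \cite{Br95} and used as a black box. So there is no ``paper's proof'' to compare your attempt to; I can only assess your argument on its own merits and against Bridson's original.

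Your proof of (1) is correct in both directions. For uniformly visible $\Rightarrow$ hyperbolic, the angle triangle inequality at an interior point $w$ of a side, combined with the visibility bound on the two opposite sides, is exactly the right mechanism; the choice $\epsilon_0<\pi/2$ and $\delta=R(\epsilon_0)$ works cleanly. For hyperbolic $\Rightarrow$ uniformly visible, the chain of estimates (Gromov product close to $d(p,[x,y])$, fellow-travelling of $[p,x]$ and $[p,y]$ up to time $(x\cdot y)_p$, and the $\mathrm{CAT}(0)$ monotonicity of comparison angles giving $\angle_p(x,y)\le\bar\angle_p(u,v)$) is sound; the constants $4\delta$ are safe if not sharp.

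Your treatment of (2) is also correct and is in fact Bridson's own route: his paper is precisely the one that establishes the Flat Plane Theorem for proper cocompact $\mathrm{CAT}(0)$-spaces, and he deduces the visibility statement from it just as you do. Your observation that a flat $\mathbb{E}^2$ is not locally visible (fixed angle $\pi/2$ subtended by arbitrarily distant segments), together with convexity of flats in $\mathrm{CAT}(0)$ so that intrinsic and ambient angles agree, completes the contradiction. Your parenthetical about properness is apt: the Flat Plane Theorem needs it, and in the manifold context of this paper the universal cover is complete, hence proper. Your remark about why the ``direct'' compactness argument is awkward (upper semicontinuity of the Alexandrov angle only) is accurate and shows you understand where the real work lies.
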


Proposition \ref{propBrid} clearly indicates that the fundamental group of closed visibility manifolds is a hyperbolic group. This induces that the simplicial volume of closed visibility manifolds is positive.

\section{Complete uniform visibility manifolds}

The notion of a relatively hyperbolic group was formulated by Gromov \cite{Gr87}.
Indeed, fundamental groups of non-compact, complete, finite volume Riemannian manifolds with pinched
negative sectional curvature are the motivating examples for formulating relatively hyperbolic groups.
Bowditch \cite{Bo00} gives two equivalent definitions of relatively hyperbolic groups, which are equivalent to the definition given in \cite{Gr87}. Here, we recall one of them.

\begin{defi}\label{defrelhyp}
Let $\Gamma$ be a group and $\mathcal{P}$ be a set of infinite subgroups. Then, $\Gamma$ is \textit{hyperbolic relative to} $\mathcal{P}$ if $\Gamma$ admits a properly discontinuous isometric action on a path-metric space $X$ with the following properties :
\begin{itemize}
\item[(1)] $X$ is proper and hyperbolic.
\item[(2)] Every point of the boundary of $X$ is either a conical limit point or a bounded parabolic point.
\item[(3)] The elements of $\mathcal{P}$ are precisely the maximal parabolic subgroups of $\Gamma$.
\item[(4)] Every element of $\mathcal{P}$ is finitely generated.
\end{itemize}
\end{defi}

Let $M$ be a non-compact, complete, finite volume Riemannian manifold with pinched negative sectional curvature. Let $\Gamma$ be the fundamental group of $M$, and let $\mathcal{P}$ be the set of
all maximal parabolic subgroups of $\Gamma$. In such a case, Farb \cite{Fa94} shows that $\Gamma$ is hyperbolic relative to $\mathcal{P}$. As the fundamental group of a closed visibility manifold is hyperbolic,
we observe that the fundamental group of non-compact, complete uniform visibility manifolds of finite volume with sectional curvature $-1\leq K_M \leq 0$ is hyperbolic relative to the set of all
maximal parabolic subgroups, as follows :

\begin{prop}\label{relhyp}
Let $M$ be a complete uniform visibility manifold of finite volume with sectional curvature $-1\leq K_M \leq 0$.
Then, $\Gamma$ is hyperbolic relative to
$\mathcal{P}$, where $\Gamma$ is the fundamental group of $M$ and $\mathcal{P}$ is the set of all maximal parabolic subgroups of $\Gamma$.
\end{prop}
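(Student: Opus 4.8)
The plan is to verify the four conditions of Definition \ref{defrelhyp} using as model space the universal cover $X=\widetilde{M}$, on which $\Gamma=\pi_1(M)$ acts by deck transformations. Since $M$ is complete with $K_M\le 0$, the cover $X$ is a complete, simply connected, nonpositively curved manifold, so by Cartan--Hadamard it is a Hadamard manifold, hence a $\text{CAT}(0)$-space, and by Hopf--Rinow it is proper. Because $M$ is a uniform visibility manifold, $X$ is uniformly visible, so Proposition \ref{propBrid}(1) gives that $X$ is hyperbolic; this establishes condition (1). The deck action of $\Gamma$ on $X$ is free, isometric, and properly discontinuous, so the ambient hypotheses of Definition \ref{defrelhyp} are in place.

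For conditions (3) and (4) I would invoke Theorem \ref{eberlein}. By tameness $M$ is the interior of a compact manifold with boundary, so each of its finitely many ends is a parabolic collared end with compact cross-section $N_i$. The stabilizer $\Gamma_{\xi_i}$ of the associated parabolic fixed point $\xi_i\in\partial X$ is exactly the image of $\pi_1(N_i)\to\Gamma$, and these stabilizers are precisely the maximal parabolic subgroups of $\Gamma$. Declaring $\mathcal{P}=\{\Gamma_{\xi_i}\}$ then realizes (3) by construction, and since each $N_i$ is compact, each $\Gamma_{\xi_i}$ is finitely generated, giving (4).

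The main work is condition (2): every $\xi\in\partial X$ must be a conical limit point or a bounded parabolic point. First, finite volume forces the limit set $\Lambda(\Gamma)$ to be all of $\partial X$, since, away from the cusps, the $\Gamma$-action is cocompact and Poincar\'e recurrence makes recurrent geodesics dense. For a parabolic fixed point $\xi_i$, compactness of $N_i$ means $\Gamma_{\xi_i}$ acts cocompactly on a horosphere centered at $\xi_i$; identifying $\partial X\setminus\{\xi_i\}$ with that horosphere by radial projection shows $\Gamma_{\xi_i}$ acts cocompactly on $\partial X\setminus\{\xi_i\}$, so $\xi_i$ is bounded parabolic. For any other $\xi$, the geodesic ray aimed at $\xi$ cannot be eventually trapped in a cusp, so by the thick--thin decomposition it re-enters the compact thick part infinitely often; translating these re-entry points back by suitable $\gamma_n\in\Gamma$ produces basepoints staying within a bounded distance of the ray while escaping to $\xi$, which is exactly the conical condition.

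I expect condition (2) to be the principal obstacle. Unlike Farb's pinched-negative-curvature setting \cite{Fa94}, here $K$ may vanish, so the classical horoball geometry is unavailable and one must argue through the Gromov-hyperbolicity of $X$ supplied by Proposition \ref{propBrid} together with the geometric control on cusps coming from Theorem \ref{eberlein}. The delicate points are (a) promoting cocompactness of $\Gamma_{\xi_i}$ on a horosphere to cocompactness on $\partial X\setminus\{\xi_i\}$ when horospheres need not be as rigid as in constant curvature, and (b) making the recurrence argument for conical points precise without pinching, where I would rely on finiteness of volume and the collared structure of the ends to guarantee that excursions of a geodesic into a cusp have bounded depth unless the ray actually converges to the cusp point.
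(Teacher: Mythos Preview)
Your outline matches the paper's proof in its overall architecture: verify the four conditions of Definition~\ref{defrelhyp} for the deck action on $X=\widetilde{M}$, with conditions (1), (3), (4) coming from Proposition~\ref{propBrid} and Theorem~\ref{eberlein} exactly as you describe, and with the bounded-parabolic half of condition~(2) following from compactness of the cusp cross-sections $N_i$ (the paper phrases this as $P\backslash(\partial X\setminus\{p\})\cong N_E$).

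Where you diverge is in the conical-limit-point half of condition~(2). You propose a direct geometric argument via recurrence of geodesic rays to the thick part, and you correctly flag the two delicate points this entails. The paper bypasses both of them by appealing to Bowditch's general machinery: once one has a $\Gamma$-invariant system of horoballs $\mathcal{B}$ (guaranteed by \cite[Proposition~6.11]{Bo00}) with $\Gamma\backslash Y(\mathcal{B})$ compact (which here is just the compact core of $M$), \cite[Proposition~6.14]{Bo00} delivers that every point of $\partial X\setminus\Pi$ is a conical limit point. This buys a clean black-box conclusion and avoids having to control horosphere geometry or cusp excursions in the absence of a curvature pinch. Your direct approach could be made to work, but the paper's route is shorter and sidesteps exactly the issues you anticipated.
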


\begin{proof}

Let $X$ be the universal cover of $M$. Because $M$ is a uniform visibility manifold, $X$ is uniformly visible and hence hyperbolic. If $M$ is closed, the Caley graph of $\Gamma$ is quasi-isometric to $X$.
Thus, $\Gamma$ is hyperbolic relative to $\mathcal{P}=\emptyset$, that is, hyperbolic.

At this stage, we suppose that $M$ is not closed. According to Theorem \ref{eberlein}, $M$ has only finitely many ends with each end being a parabolic, Riemannian collared end. More precisely, there exists a neighborhood $U_E$ of $E$, a compact $C^2$ codimension $1$ submanifold $N_E$ of $M$ for each end $E$ of $M$ and a $C^1$ diffeomorphism $F : N_E \times (0,\infty) \rightarrow U_E$ such that the curves $t \rightarrow F(n,t)$, $n\in N_E$, are unit speed distance minimizing geodesics of $M$ that intersect each hypersurface $F(N_E \times \{s\})$ orthogonally. Indeed, $N_E$ is the projection of a precisely invariant horosphere in $X$ at a point $p \in \partial X$ fixed by a maximal parabolic subgroup of $\Gamma$, and $U_E$ is the projection of the corresponding open horoball in $X$. Hence, $M$ is the interior of a compact manifold with boundary.

The tameness of $M$ implies that $\Gamma$ is finitely generated and that this is also true for each maximal parabolic subgroup of $\Gamma$. Clearly, $\Gamma$ acts properly discontinuously on $X$. Furthermore, it is clear that conditions $(1),(3)$ and $(4)$ are satisfied. Now, we only need to show condition $(2)$; that is, every point of the boundary of $X$ is either a conical limit point or a bounded parabolic point.

Let $\partial X$ denote the boundary of $X$ and $p \in \partial X$ be a parabolic point associated with a maximal parabolic subgroup $P$ of $\Gamma$.
Then, $P\backslash(\partial X - \{p\})$ is homeomorphic to $N_E$, as above, for some end $E$ of $M$. Because $N_E$ is compact, $p$ is a bounded parabolic point according to this definition. Thus, every parabolic point is a bounded parabolic point.

Let $\Pi$ be the set of all bounded parabolic points in $\partial X$ with respect to $\Gamma$.
It is clear that $\Pi$ is $\Gamma$-invariant. Moreover, $\Pi/\Gamma$ is finite because $M$ has finitely many ends.
According to \cite[Proposition 6.11]{Bo00}, there exists an invariant system $\mathcal{B}$ of horoballs, that is, a collection, $(B(p))_{p\in \Pi}$, indexed by $\Pi$, such that $B(p)$ is a horoball about $p$ and $B(\gamma p)=\gamma B(p)$ for all $\gamma \in \Gamma$ and all $p\in \Pi$. In this case, we have a closed $\Gamma$-invariant subset $$Y(\mathcal{B})=X-\bigcup_{p\in \Pi} \text{int}B(p).$$

The closed subset, $\Gamma\backslash Y(\mathcal{B})$ is a compact manifold with boundary $N_E$. According to \cite[Proposition 6.14]{Bo00}, it can be concluded that every point of $\partial X / \Pi$ is a conical limit point. We refer the reader to \cite[Section 6]{Bo00} for a more detailed explanation of this. Hence, every point of $\partial X$ is either a conical limit point or a bounded parabolic point, which implies that $\Gamma$ is hyperbolic relative to $\mathcal{P}$.
\end{proof}

If $-b\leq K_M \leq -a <0$, every maximal parabolic subgroup of $\Gamma$ is virtually nilpotent according to the Margulis lemma. This does not hold for general uniform visibility manifolds with non-positive sectional curvature.
However, we observe that every maximal parabolic subgroup of $\Gamma$ is virtually nilpotent for a non-uniform lattice $\Gamma$ in a uniformly visible space $X$.

\begin{prop}\label{nilpotent}
Let $\Gamma$ be the fundamental group of a noncompact, complete uniform visibility manifold $M$ of finite volume with sectional curvature  $-1\leq K_M\leq 0$. Then, every maximal parabolic subgroup of $\Gamma$ is virtually nilpotent.
\end{prop}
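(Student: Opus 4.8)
The plan is to deduce the statement from the Margulis lemma for manifolds of bounded curvature, the point being that the non-trivial elements of a maximal parabolic subgroup displace points by arbitrarily small amounts as one travels out toward the fixed point at infinity. Write $X$ for the universal cover of $M$, a Hadamard manifold since $M$ is complete with $K_M \le 0$. Because $M$ is a manifold, $\Gamma$ acts freely on $X$, so $\Gamma$ is torsion-free; in particular no non-trivial element of $\Gamma$ is elliptic. Let $p \in \partial X$ be a parabolic point and $P = \mathrm{Stab}_\Gamma(p)$ the associated maximal parabolic subgroup. By Theorem \ref{eberlein}, $P$ is finitely generated, say $P = \langle g_1, \dots, g_k \rangle$. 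As in the proof of Proposition \ref{relhyp}, $P$ preserves a horoball centered at $p$; hence none of the $g_i$ can be hyperbolic (a hyperbolic isometry with an endpoint at $p$ shifts the Busemann level and so moves that horoball off itself), and every non-trivial element of $P$ is a parabolic isometry fixing $p$. Finally, since $-1 \le K_M \le 0$ we have $|K_M| \le 1$, so the Margulis lemma furnishes a constant $\epsilon_n > 0$, depending only on $n = \dim M$, such that for every $x \in X$ the subgroup $\Gamma_{\epsilon_n}(x) = \langle \gamma \in \Gamma : d(x, \gamma x) \le \epsilon_n \rangle$ is virtually nilpotent.

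The geometric heart of the argument is the decay statement: for a geodesic ray $\xi \colon [0,\infty) \to X$ with $\xi(\infty) = p$ and for each parabolic $g$ fixing $p$,
\[
d\bigl(\xi(t), g\,\xi(t)\bigr) \longrightarrow 0 \qquad (t \to \infty).
\]
Since $g$ fixes $p$ it preserves the Busemann function at $p$, so $\xi$ and $g \circ \xi$ are asymptotic rays to $p$ and the function $t \mapsto d(\xi(t), g\,\xi(t))$ is bounded; being also convex (it is the restriction to $\xi$ of the convex displacement function $x \mapsto d(x, gx)$ on the Hadamard manifold $X$), it is non-increasing and tends to some limit $L \ge 0$. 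The assertion is that $L = 0$. This is exactly the place where uniform visibility is indispensable: a positive limit would mean that $\xi$ and $g \circ \xi$ stay a definite distance apart all the way out to $p$, i.e.\ the horospheres centered at $p$ would fail to contract along some direction as they advance toward $p$, producing flat behaviour in $X$ that is forbidden by the visibility axiom (equivalently, by the Gromov-hyperbolicity of $X$ from Proposition \ref{propBrid}). In the collar coordinates $N_E \times (0,\infty)$ of Theorem \ref{eberlein} this says precisely that the normal geodesics run out to $p$ while the cross-sectional horospheres genuinely shrink, so that corresponding points on asymptotic rays converge.

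Granting the decay, the conclusion is immediate. Because $P$ is generated by the finitely many parabolic elements $g_1, \dots, g_k$ and $d(\xi(t), g_i\,\xi(t)) \to 0$ for each $i$, there is a parameter $t_0$ with $d(\xi(t_0), g_i\,\xi(t_0)) \le \epsilon_n$ for all $i$ simultaneously. Putting $x = \xi(t_0)$, every generator of $P$ lies in $\Gamma_{\epsilon_n}(x)$, whence $P \subseteq \Gamma_{\epsilon_n}(x)$. Since $\Gamma_{\epsilon_n}(x)$ is virtually nilpotent by the Margulis lemma, and any subgroup of a virtually nilpotent group is virtually nilpotent, $P$ is virtually nilpotent, as required.

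The main obstacle is precisely the decay statement $L = 0$, i.e.\ that the generators become \emph{simultaneously} short far out in the cusp. In pinched negative curvature this is automatic, since all horospheres contract exponentially; that is the case already handled by the Margulis lemma in the remark preceding the proposition. For general non-positive curvature it genuinely fails: a Euclidean (flat) direction in a cusp supplies a parabolic isometry whose displacement stays bounded away from $0$ along every ray to $p$, so that the conclusion breaks down. What saves the present situation is that uniform visibility forbids such flat directions, forcing the full cross-sectional horosphere to contract toward $p$. Turning this qualitative contraction into the quantitative statement that every fixed parabolic has displacement tending to $0$ — for instance by bounding the shape operators of the horospheres away from degeneracy by means of the visibility axiom, or by appealing directly to Eberlein's analysis of the geometry of visibility manifolds at parabolic points — is the step demanding the most care.
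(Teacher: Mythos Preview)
Your approach via the Margulis lemma is genuinely different from the paper's, and the gap you yourself flag is real: the decay statement $d(\xi(t),g\,\xi(t))\to 0$ is the entire content of the argument, and you do not prove it. Convexity and boundedness give monotonicity and a limit $L\ge 0$, but nothing you write forces $L=0$. Your heuristic that $L>0$ ``produces flat behaviour'' is suggestive but not a proof: a $\delta$-hyperbolic CAT(0) space can contain flat strips (just not flat half-planes), and an asymptotically-constant distance between two rays does not hand you an isometrically embedded half-plane without a limiting argument that typically requires cocompactness --- which you do not have here. Likewise ``bounding the shape operators of the horospheres away from degeneracy by means of the visibility axiom'' is exactly the hard analytic statement you would need to establish, not something you can invoke. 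So as written the proof is incomplete at its crucial step.

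The paper avoids this difficulty altogether. Having already shown (Proposition~\ref{relhyp}) that $\Gamma$ is hyperbolic relative to the set $\mathcal{P}$ of maximal parabolic subgroups, it quotes a theorem of Dahmani--Yaman: for such a pair $(\Gamma,\mathcal{P})$ acting on $X$, every element of $\mathcal{P}$ is virtually nilpotent if and only if $X$ is \emph{geometrically bounded} (every $R$-ball is covered by $f(R)$ balls of radius $1$, and every $1$-ball by $f(R)$ balls of radius $1/R$, for some fixed function $f$). The proof then reduces to checking geometric boundedness of $X$, which follows from the two-sided curvature bound $-1\le K\le 0$ via Bishop--G\"unther volume comparison: a standard packing argument bounds the number of disjoint $(1/2)$-balls inside an $R$-ball by $\mathrm{vol}(B^{-1}(R))/\mathrm{vol}(B^{0}(1/2))$, and similarly at the second scale. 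This is short and entirely soft; it never needs to know that displacements of individual parabolics shrink along rays.

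If you want to salvage your route, the finite-volume hypothesis is the missing lever: the cusp cross-sections $N_E\times\{t\}$ have volume tending to $0$, and combining this with the lower curvature bound (hence lower volume bounds on balls) should force the diameter of the compact horosphere quotients to tend to $0$, which does yield the simultaneous decay you need. But making that precise is real work --- comparable in effort to the paper's argument and rather less clean.
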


\begin{proof}
Let $X$ be the universal cover of $M$.
Let $\mathcal{P}$ be the set of all maximal parabolic subgroups of $\Gamma$. Then, $\Gamma$ is hyperbolic relative to $\mathcal{P}$, as shown in Proposition \ref{relhyp}.
Dahmani and Yaman \cite{DY05} prove that every element of $\mathcal{P}$ is virtually nilpotent if and only if $X$ is geometrically bounded. Recall that a space $X$ is geometrically bounded if there exists a function $f : \mathbb{R}_+ \rightarrow \mathbb{R}_+$ such that
for all $R > 0$, every ball of radius $R$ can be covered by $f(R)$ balls of radius $1$ and every ball of radius $1$
can be covered by $f(R)$ balls of radius $1/R$.

Now, we claim that $X$ is geometrically bounded due to the sectional curvature condition of $-1\leq K_X \leq 0$.
Let $\text{dim}X=n$. Let $B^{\kappa}(R)$ be the geodesic ball of radius $R$ in the complete, simply connected Riemannian model space of constant curvature $\kappa$.
It follows from the comparison of the volumes of geodesic balls given by Bishop-G\"{u}nther-Cheeger-Gromov that for every $p\in X$, we have the inequality
$$ vol(B^0(R)) \leq vol(B_p(R)) \leq vol(B^{-1}(R)),$$
where $B_p(R)$ is the geodesic ball of radius $R$ centered at $p$.

Let $\mathcal{V}$ be a finite set of points in $B_p(R)$ such that
\begin{itemize}
\item[(1)] any point of $\mathcal{V}$ lies at distance at least $1/2$ from the boundary of $B_p(R)$,
\item[(2)] any two points of $\mathcal{V}$ lie at a distance at least $1$ from each other, and
\item[(3)] for all $x\in B_p(R)$, there exists $y \in \mathcal{V}$ such that the distance from $x$ to $y$ is less than $1$.
\end{itemize}
A set $\mathcal{V}$ is obtained by successively marking points in $X$ at pairwise
distances $\geq 1$ until there is no more room for such points.
Then, it becomes clear that
$\{B_x(1) \}_{x\in \mathcal{V}}$ is a covering of $B_p(R)$ and that $\{B_x(1/2) \}_{x\in \mathcal{V}}$ is the set of pairwise disjoint balls totally contained in $B_p(R)$.
Hence, we have the following inequality
$$ |\mathcal{V}|\cdot vol (B^0(1/2))\leq \sum_{x\in \mathcal{V}} vol(B_x(1/2 )) \leq
 vol(B_p(R)) \leq vol(B^{-1}(R)).$$

Let $f_1(R)$ be the nearest integer to $vol(B^{-1}(R))/ vol(B^0(1/2))$. Then, every ball of radius $R$ in $X$ can be covered by $f_1(R)$ balls of radius $1$. In a similar argument, every ball of radius $1$ in $X$ can be covered by $f_2(R)$ balls of radius $1/R$ where $f_2(R)$ is the nearest integer to $vol(B^{-1}(1))/ vol(B^0(1/2R))$. Define $f : \mathbb{R}_+ \rightarrow \mathbb{R}_+$ by $$f(R) = \max \{ f_1(R),f_2(R) \}.$$
Then, we can conclude that $X$ is geometrically bounded. This completes the proof.
\end{proof}

\begin{lemma}\label{finiteness}
Let $M$ be a complete uniform visibility manifold of finite volume with sectional curvature $-1 \leq K_M \leq 0$. Then, the simplicial volume of $M$ is finite.
\end{lemma}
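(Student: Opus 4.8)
The plan is to combine the tameness of $M$ with the amenability of its cusp cross-sections, thereby reducing the locally finite simplicial volume of the open manifold $M$ to the relative simplicial volume of its natural compactification. By Theorem \ref{eberlein}, $M$ is tame, so I may write $M = \mathrm{int}(\bar M)$, where $\bar M$ is a compact manifold with boundary $\partial \bar M = \bigsqcup_E N_E$, the disjoint union of the cusp cross-sections. Since $\bar M$ is compact, any smooth triangulation furnishes a relative fundamental cycle with finitely many simplices, so $\|\bar M, \partial \bar M\| < \infty$. The goal is therefore to show that the locally finite fundamental class of $M$ admits a representative of finite total weight.

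The decisive input is that every boundary component has amenable fundamental group. Indeed, as recorded in the proof of Proposition \ref{relhyp}, each $N_E$ is the quotient $P\backslash S$ of a horosphere $S \subset X$ by the maximal parabolic subgroup $P$ fixing the corresponding point of $\partial X$. Since horospheres in a Hadamard manifold are contractible and $P$ acts freely, $\pi_1(N_E) \cong P$; and by Proposition \ref{nilpotent} the group $P$ is virtually nilpotent, hence amenable. Thus each component $N_E$ of $\partial \bar M$ is a closed manifold with amenable fundamental group, so in particular $\|N_E\| = 0$.

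With these hypotheses verified, I would invoke Gromov's finiteness principle for tame manifolds with amenable boundary \cite{Gr82} (see also the treatment carried out for $\mathbb{Q}$-rank one locally symmetric spaces in \cite{KK11}): when $M$ is the interior of a compact manifold $\bar M$ each of whose boundary components has amenable fundamental group, one has
\[
\|M\| = \|\bar M, \partial \bar M\| < \infty .
\]
One inequality, $\|\bar M, \partial \bar M\| \leq \|M\|$, is formal: collapsing each collar $N_E \times [0,\infty)$ onto $N_E$ sends a locally finite fundamental cycle of $M$ to a relative fundamental cycle of $(\bar M, \partial \bar M)$ of no greater norm. For finiteness it is the reverse inequality that matters: starting from a finite relative fundamental cycle $z$ of $(\bar M, \partial \bar M)$, one must extend $z$ across the collars to a genuine locally finite cycle on $M$ of finite total weight.

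The main obstacle is exactly this collar-filling step, and it is where amenability is indispensable. The boundary $\partial z$ is a fundamental cycle of the amenable closed manifold $\partial \bar M$; since $\|N_E\| = 0$ one can represent $[N_E]$ by cycles of arbitrarily small $\ell^1$-norm, while the amenability of $\pi_1(N_E)$ furnishes a uniform boundary condition, equivalently bounded chain homotopies, that interpolates between these representatives efficiently. Stacking the resulting homologies at successive heights $[k,k+1]$ in the collar $N_E \times [0,\infty)$ produces a locally finite telescope whose total weight is summable; cancelling $\partial z$ against the boundary of this telescope yields the desired finite-weight locally finite fundamental cycle. Without amenability the telescope's weight could fail to be summable and $\|M\|$ could be infinite, so it is precisely the control furnished by Propositions \ref{relhyp} and \ref{nilpotent} that forces $\|M\| < \infty$.
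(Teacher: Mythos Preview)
Your argument is correct and follows the same overall strategy as the paper: use Theorem~\ref{eberlein} to see that $M$ is the interior of a compact manifold $\bar M$, use Proposition~\ref{nilpotent} to conclude that each boundary component has virtually nilpotent (hence amenable) fundamental group, and then invoke a finiteness principle for tame manifolds with amenable boundary.

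The only substantive difference is in how the final step is packaged. You cite Gromov's equivalence theorem $\|M\|=\|\bar M,\partial\bar M\|$ from \cite{Gr82} and sketch the underlying telescope construction driven by the uniform boundary condition for amenable groups; this has the bonus of giving the actual equality, not just finiteness. The paper instead argues cohomologically: since $\pi_1(\partial\bar M)$ is amenable its bounded cohomology vanishes, so by the duality of \cite[Corollary~5.1]{Lo08} the $\ell^1$-homology of $\partial\bar M$ vanishes as well, whence the fundamental class of $\partial\bar M$ is zero in $\ell^1$-homology and L\"oh's finiteness criterion \cite[Theorem~6.4]{Lo08} applies directly. The two routes encode the same phenomenon; yours is more geometric and self-contained in spirit, while the paper's is shorter on the page because the analysis is outsourced to a single black-box reference.
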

\begin{proof}
If $M$ is closed, it is clear. Suppose that $M$ is non-compact. Then, $M$ is the interior of a compact manifold $V$ with boundary. According to Proposition \ref{nilpotent}, the fundamental group of $\partial V$ is virtually nilpotent and hence amenable. Note that the bounded cohomology of an amenable group
vanishes. Due to the duality between the $\ell^1$-homology and the bounded cohomology in \cite[Corollary 5.1]{Lo08}, the $\ell^1$-homology of $\partial V$ also vanishes. This means that
the fundamental class of $\partial V$ vanishes in the $\ell^1$-homology of $\partial V$.
According to the finiteness of the criterion in \cite[Theorem 6.4]{Lo08}, the simplicial volume of $M$ is finite.
\end{proof}

\begin{rem}
The sectional curvature condition $-1 \leq K \leq 0$ in Lemma \ref{finiteness} is essential.
Here is a counterexample.
Let $M$ be a closed hyperbolic $n$-manifold and $N$ be a totally geodesic, embedded, codimension $1$ closed submanifold of $M$. Delete the $\epsilon$-tubular neighborhood $U$ of $N$ for a sufficiently small $\epsilon >0$. Let $W$ be a component of $M- U$. Then, $W$ admits a complete metric of finite volume with  sectional curvature $K_W \leq -1$ \cite{Ph11}.

It follows from the sectional curvature condition $K_W \leq -1$ that $W$ is a complete uniform visibility manifold of finite volume. Furthermore, $W$ is tame.
However, a component of $\overline{W}$ is homeomorphic to the closed hyperbolic manifold $N$.
As the simplicial volume of $N$ is strictly positive, it is impossible for the fundamental class of $N$ to vanish in the $\ell^1$-homology of $N$. Hence, the simplicial volume of $W$ is not finite. We refer the reader to \cite{Ph11} for more details about the construction of $W$.
\end{rem}

\section{Simplicial volume and minimal volume}

We now prove that complete uniform visibility manifolds of finite volume with sectional curvature $-1\leq K \leq 0$ have positive simplicial volumes and are therefore the minimal volume.

\subsection{Relative hyperbolicity and bounded cohomology}
First, we need to look at the definition of relative hyperbolicity as given by Mineyev and Yaman in order to use their result of the bounded cohomology of relatively hyperbolic groups. In fact,
they slightly generalize Bowditch's combinational formulation of relative hyperbolicity, as follows :

\begin{defi}\label{mineyev}
Let $\Gamma$ be a group and $\mathcal{P}=\{ \Gamma_i \text{ }|\text{ }i\in I\}$ be a family of its subgroups. $\Gamma$ is called relatively hyperbolic with respect to $\mathcal{P}$ if there exists a graph $\mathcal{K}$ on which $\Gamma$ acts such that the following conditions are satisfied.
\begin{itemize}
\item $\Gamma$ is finitely generated.
\item $I$ is finite and each $\Gamma_i$ is finitely generated.
\item $\mathcal{K}$ is fine and has thin triangles.
\item There are finitely many orbits of edges and each edge stabilizer is finite.
\item There exists a $\Gamma$-invariant subset $\mathcal{V}'$ such that $\mathcal{V}_\infty \subset \mathcal{V}' \subset \mathcal{V}$ and the stabilizers of vertices in $\mathcal{V}'$ are precisely $\Gamma_i$ and their conjugates.
\end{itemize}
\end{defi}

Definition \ref{mineyev} allows the elements of $\mathcal{P}$ to be finite as well as infinite. In contrast,
Definition \ref{defrelhyp} only allows the elements of $\mathcal{P}$ to be infinite. Note that the family $\mathcal{P}$ of subgroups is the set of all maximal parabolic subgroups in Definition \ref{defrelhyp}, but in Definition \ref{mineyev}, $\mathcal{P}$ should be thought of as the set of conjugacy classes of maximal parabolic subgroups. Clearly, Definition \ref{defrelhyp} implies Definition \ref{mineyev}.

\begin{lemma}\label{comparison}
Let $V$ be a compact manifold with boundary whose interior is homeomorphic to a complete uniform visibility manifold of finite volume with non-positive sectional curvature bounded from below.
Then, $$H_b^k(V,\partial V) \rightarrow H^k(V,\partial V),$$
is surjective for all $k \geq 2$.
\end{lemma}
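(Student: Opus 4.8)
The plan is to reduce the statement to the surjectivity of the comparison map in the relative bounded cohomology of the group pair $(\Gamma,\mathcal{P})$ and then to invoke the theorem of Mineyev and Yaman. By Proposition \ref{relhyp}, $\Gamma=\pi_1(V)=\pi_1(M)$ is hyperbolic relative to the family $\mathcal{P}$ of (conjugacy classes of) maximal parabolic subgroups in the sense of Definition \ref{defrelhyp}; as observed after Definition \ref{mineyev}, this implies that $(\Gamma,\mathcal{P})$ is relatively hyperbolic in the sense of Mineyev and Yaman. Their comparison theorem then gives that $H_b^k(\Gamma,\mathcal{P})\to H^k(\Gamma,\mathcal{P})$ is surjective for all $k\geq 2$. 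It therefore suffices to identify the relative (bounded) cohomology of the manifold pair $(V,\partial V)$ with that of the group pair $(\Gamma,\mathcal{P})$ in a way compatible with the comparison maps.

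On the bounded side this bridge is essentially formal. Since the bounded cohomology of a connected space depends only on its fundamental group (Ivanov), one has $H_b^*(V)\cong H_b^*(\Gamma)$ and $H_b^*(\partial V_i)\cong H_b^*(P_i)$ for each boundary component $\partial V_i$, where $P_i$ is the corresponding maximal parabolic subgroup. Comparing the long exact sequence of the pair $(V,\partial V)$ with that of the group pair and applying the five lemma yields $H_b^k(V,\partial V)\cong H_b^k(\Gamma,\mathcal{P})$ for all $k$; no asphericity is needed at this stage.

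On the ordinary side I would first check that all the spaces involved are aspherical. Since $M$ has non-positive curvature, its universal cover $X$ is a Hadamard manifold, so $M$, and hence $V$, is a $K(\Gamma,1)$. By the description in Proposition \ref{relhyp}, each boundary component is a quotient $\partial V_i=P_i\backslash S_i$ of a horosphere $S_i\subset X$ about the point of $\partial X$ fixed by $P_i$. The Busemann function centered at that point has unit gradient whose flow identifies $X$ with $S_i\times\mathbb{R}$, so $S_i$ is contractible; as $\Gamma$ is torsion-free, $P_i$ acts freely and properly discontinuously, and cocompactly because $\partial V_i$ is compact, so $\partial V_i$ is a $K(P_i,1)$. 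Hence $H^*(V)\cong H^*(\Gamma)$ and $H^*(\partial V_i)\cong H^*(P_i)$, and the five lemma applied to the two long exact sequences gives $H^k(V,\partial V)\cong H^k(\Gamma,\mathcal{P})$ for all $k$. By naturality of the Ivanov isomorphism and of the Eilenberg--MacLane identifications these isomorphisms intertwine the topological and algebraic comparison maps, so the desired surjectivity for $k\geq 2$ follows from the Mineyev--Yaman surjectivity cited above.

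The main obstacle is exactly this relative identification, and within it the ordinary-cohomology step: one must know that the cusp cross-sections $\partial V_i$ are aspherical (which is what the horosphere argument supplies) and that the relative group cohomology appearing in the Mineyev--Yaman theorem agrees, compatibly with the connecting homomorphisms, with the topological relative cohomology of $(V,\partial V)$. Verifying the asphericity of $\partial V$ and checking that every square in the resulting ladder of long exact sequences commutes is the technical heart of the argument; once it is in place, the conclusion is immediate.
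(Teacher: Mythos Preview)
Your proposal is correct and follows essentially the same approach as the paper: invoke Proposition~\ref{relhyp} to get relative hyperbolicity, apply the Mineyev--Yaman surjectivity theorem for the group pair, and then identify the relative (bounded) cohomology of $(V,\partial V)$ with that of $(\Gamma,\mathcal{P})$ compatibly with the comparison maps. The only difference is in the execution of that identification: the paper works with the horoball neighborhoods $U_i=P_i\backslash H_i$ and directly cites that $(M,U)$ is a classifying space for $(\Gamma,\mathcal{P})$ in the sense of \cite[Section~9.1]{MY07}, whereas you argue by hand via Ivanov's theorem, asphericity of the horosphere quotients, and two applications of the five lemma---a slightly more explicit but equivalent route.
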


\begin{proof}
Let $M$ be a complete uniform visibility manifold of finite volume with sectional curvature bounded from below that is homeomorphic to the interior of $V$. We can assume sectional curvature $-1 \leq K_M \leq 0$ by scaling the metric on $M$.
Let $\Gamma$ be the fundamental group of $M$ and let $\mathcal{P}$ be the set of all maximal parabolic subgroups of $\Gamma$.

As shown in Proposition \ref{relhyp}, $\Gamma$ is hyperbolic relative to $\mathcal{P}$. There are finitely many conjugacy classes of maximal parabolic subgroups of $\Gamma$, as $M$ has finitely many parabolic, Riemannian collared ends.
Let $[P_1],\ldots,[P_l]$ denote the conjugacy classes of $\mathcal{P}$, where $P_1,\ldots,P_l$ are the maximal parabolic subgroups in $\mathcal{P}$. We set $\mathcal{P}=\{P_i\text{ }|\text{ }i=1,\ldots,l\}$ according to the abuse of notation.
Then, the pair $(\Gamma,\mathcal{P})$ is also hyperbolic in the sense of Mineyev and Yaman.
It follows from \cite[Theorem 59]{MY07} that the relative comparison map
$$c : H^k_b(\Gamma,\mathcal{P}) \rightarrow H^k(\Gamma,\mathcal{P})$$
is surjective for all $k\geq 2$

Each end of $M$ is associated with a conjugacy class of the maximal parabolic subgroups of $\Gamma$. Let $E_i$ denote the end of $M$ associated with $P_i$ for each $i=1,\ldots,l$. Then, there exists an open horoball $H_i$ in $X$ such that $U_i=P_i\backslash H_i$ is a neighborhood of $E_i$ which is diffeomorphic to $N_i \times (0,\infty)$ for each $i=1,\ldots,l$, where $N_i$ is the projection of a horosphere in $H_i$ fixed by $P_i$. Moreover, $U_i$ are pairwise disjoint subspaces of $M$.

Note that $M$ is a classifying space of $\Gamma$ because $X$ is contractible. Also, every horoball in $X$ is contractible; hence, $U_i$ is a classifying space of $P_i$. Let $U=\bigcup_{i=1}^l U_i$.
Then, $(M,U)$ is a classifying space for $(\Gamma,P)$ in the sense of \cite[Section 9.1]{MY07}. This implies that the comparison map $H^k_b(M,U)\rightarrow H^k(M,U)$ is identical to the map $H^k_b(\Gamma,\mathcal{P}) \rightarrow H^k(\Gamma,\mathcal{P})$ for all $k\geq 0$.
Given that $U$ is the collared neighborhood of $\partial V$ in $V$, it is clear that $(M,U)$ and $(V,\partial V)$ are homotopy equivalent. Finally, we can conclude that the comparison map $H^k_b(V,\partial V)\rightarrow H^k(V, \partial V)$ is identical to the map $H^k_b(\Gamma,\mathcal{P}) \rightarrow H^k(\Gamma,\mathcal{P})$ for all $k\geq 0$ and is hence surjective for all $k\geq 2$.
\end{proof}

\subsection{Simplicial volume}
Let $X$ be any topological space and $Y$ be the subset of $X$. Then, the $\ell^1$-norm in the real relative singular chain complex $C_*(X,Y)$ is defined by $\|c\|_1 =\sum |a_i|$ for $c=\sum a_i \sigma_i$ in $C_*(X,Y)$. This $\ell^1$-norm gives rise to a seminorm on the homology $H_*(X,Y)$, as follows :
$$\| \alpha \|_1 = \inf \|z\|_1,$$
where $z$ runs over all singular cycles representing $\alpha \in H_*(X,Y)$.

For a compact manifold $M$, the simplicial volume $\|M ,\partial M \|$ of $M$ is defined as the seminorm of the relative fundamental class $[M,\partial M]$ of $M$. If $\partial M = \emptyset $, the simplicial volume of $M$ is denoted by $\| M \|$.

If $M$ is an $n$-dimensional non-compact manifold, its fundamental class is well defined in the locally finite homology $H^\text{lf}_*(M)$ of $M$ with trivial coefficient. The locally finite homology $H^\text{lf}_*(M)$ of $M$ is defined as the homology of the locally finite chain complex $C_*^\text{lf}(M)$.
More precisely, let $S_k(M)$ be the set of singular $k$-simplices of $M$ and let $S^\text{lf}_k(M)$ denote the set of all locally finite subsets of $S_k(M)$; that is, if $A \in S^\text{lf}_k(M)$, any compact subset of $M$ intersects the image of only finitely many elements of $A$. The locally finite chain complex $C_*^\text{lf}(M)$ is then defined by
$$C_*^\text{lf}(M)= \Big\{\sum_{\sigma \in A} a_\sigma \cdot \sigma \text{ }\Big| \text{ }A \in S^\text{lf}_*(X) \text{ and }a_\sigma \in \mathbb{R} \Big\}.$$

As the $\ell^1$-seminorm on $H_*(M)$ is induced, the $\ell^1$-seminorm on $H^\text{lf}_*(M)$ is
induced from the $\ell^1$-norm on the locally finite chain complex $C^\text{lf}_*(M)$ with respect to the basis given by all singular simplices. Because $H^\text{lf}_n(M,\mathbb{Z}) \cong \mathbb{Z}$, the fundamental class of $M$ is well defined in $H^\text{lf}_n(M) \cong \mathbb{R}$.
The simplicial volume of $M$ is defined as the $\ell^1$-seminorm of the locally finite fundamental class of $M$.
In particular, if $M$ is the interior of a compact manifold $V$, then we have the inequality
$$ \| V,\partial V \| \leq \| M \|.$$
This can be shown by the cohomological definition of the simplicial volume.
For more details, the reader can refer to \cite{Gr82}.

\begin{thm}
Let $M$ be a complete uniform visibility manifold of finite volume with sectional curvature $-1\leq K_M \leq 0$.
Then the simplicial volume of $M$ is strictly positive.
\end{thm}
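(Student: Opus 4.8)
The plan is to derive positivity directly from the surjectivity of the relative comparison map established in Lemma \ref{comparison}, using the classical duality between bounded cohomology and the $\ell^1$-seminorm. Set $n=\dim M$; note $n\geq 2$, and after passing to the orientation double cover if necessary (which preserves the curvature and visibility hypotheses and only scales the simplicial volume, hence preserves positivity) I may assume $M$ is oriented. If $M$ is closed, I put $V=M$ with $\partial V=\emptyset$ (this case is in any event classical, cf. the introduction); if $M$ is non-compact, Theorem \ref{eberlein} exhibits $M$ as the interior of a compact oriented $n$-manifold $V$ with boundary. In both cases the inequality $\|V,\partial V\|\leq \|M\|$ recorded above reduces the theorem to proving $\|V,\partial V\|>0$.

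First I would pin down the fundamental coclass. By Lefschetz duality, $H^n(V,\partial V;\mathbb{R})\cong H_0(V;\mathbb{R})\cong\mathbb{R}$ for connected $V$, and I may choose a generator $\omega\in H^n(V,\partial V;\mathbb{R})$ normalized so that its Kronecker pairing with the relative fundamental class obeys $\langle\omega,[V,\partial V]\rangle=1$. Since $n\geq 2$, Lemma \ref{comparison} makes the comparison map $c:H^n_b(V,\partial V)\to H^n(V,\partial V)$ surjective, so I can lift $\omega$ to a class $\bar\omega\in H^n_b(V,\partial V)$ with $c(\bar\omega)=\omega$ and finite sup-norm $\|\bar\omega\|_\infty<\infty$.

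The positivity is then forced by the pairing estimate. The comparison map is compatible with the Kronecker pairing, and for any bounded cocycle $f$ representing $\bar\omega$ and any relative cycle $z$ representing $[V,\partial V]$ one has $|f(z)|\leq\|f\|_\infty\cdot\|z\|_1$; taking the infimum over representing cycles $z$ and passing to the level of classes gives
$$1=\langle\omega,[V,\partial V]\rangle=\langle\bar\omega,[V,\partial V]\rangle\leq\|\bar\omega\|_\infty\cdot\|V,\partial V\|.$$
Hence $\|V,\partial V\|\geq\|\bar\omega\|_\infty^{-1}>0$, and therefore $\|M\|\geq\|V,\partial V\|>0$, which is the assertion.

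The formal argument above is the standard route — a bounded lift of the fundamental coclass forces the simplicial volume to be positive — so the genuine content of the proof lies entirely in Lemma \ref{comparison}, and the one point requiring care here is the relative duality in the bounded setting: I must ensure that $H^n_b(V,\partial V)$ pairs with the relative fundamental class so that the estimate $|\langle\bar\omega,\alpha\rangle|\leq\|\bar\omega\|_\infty\|\alpha\|_1$ holds and is compatible with $c$. This is supplied by the duality framework of \cite{Lo08} (already invoked in Lemma \ref{finiteness}) together with the identification of $(V,\partial V)$ with the classifying-space pair $(M,U)$ from the proof of Lemma \ref{comparison}; the finiteness of $\|M\|$ from Lemma \ref{finiteness} then places the invariant in the range $0<\|V,\partial V\|\leq\|M\|<\infty$.
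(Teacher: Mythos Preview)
Your argument is correct and follows essentially the same route as the paper: reduce to the compact pair $(V,\partial V)$, lift the dual fundamental class through the surjective comparison map of Lemma~\ref{comparison}, and conclude positivity of $\|V,\partial V\|$ via the standard duality between bounded cohomology and the $\ell^1$-seminorm, then use $\|V,\partial V\|\leq\|M\|$. Your version is in fact slightly more careful than the paper's in handling orientation and in spelling out the pairing inequality explicitly.
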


\begin{proof}
Let $V$ be a compact manifold with boundary whose interior is homeomorphic to $M$.
Then, the relative simplicial volume of $V$ can be computed in terms of the bounded cohomology of $(V,\partial V)$ as follows :
$$\| V,\partial V \| = \sup \bigg\{ \frac{1}{\|\omega\|_\infty}\text{ }\bigg|\text{ }\omega\in H^n_b(V,\partial V)\text{ and } \langle \omega, [V,\partial V] \rangle =1 \bigg\},$$
where $[V,\partial V]$ is the relative fundamental class of $V$.
Here, $\sup \emptyset = 0$.

The existence of a bounded cohomology class $\omega$ satisfying $\langle \omega, [V,\partial V] \rangle =1$ implies the positivity of the simplicial volume $\|V,\partial V \|$. It is a standard fact that there exists a dual cohomology class $[V,\partial V]^*$ in $H^n(V,\partial V)$ satisfying $ \langle [V,\partial V]^*,  [V,\partial V] \rangle =1$. According to Lemma \ref{comparison}, there exists a bounded cohomology class $[V,\partial V]^*_b$ in $H^n_b(V,\partial V)$ representing $[V,\partial V]^*$. One can easily check that $ \langle [V,\partial V]^*_b,  [V,\partial V] \rangle =1 .$
Therefore, the simplicial volume $\|V,\partial V\|$ is positive.
From the inequality
$$0<\| V,\partial V\| \leq \|M\|,$$
it follows that $\| M \|$ is strictly positive.
\end{proof}

For an $n$-dimensional smooth manifold $M$ with $\text{Ricci}_M \geq -(n-1)$, Gromov proves that
$$\| M\| \leq (n-1)^n n! \cdot \text{Minvol}(M).$$

Observe that a bound from below for sectional curvature, $K_M \geq -1$, implies $\text{Ricci}_M \geq -(n-1)$.
Hence, the following corollary is obtained immediately :

\begin{cor}
The minimal volume of complete uniform visibility manifolds of finite volume with sectional curvature
 $-1\leq K_M \leq 0$ is positive.
\end{cor}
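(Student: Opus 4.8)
The plan is to deduce the positivity of the minimal volume directly from the positivity of the simplicial volume established in the preceding theorem, by dividing through Gromov's lower bound for the minimal volume in terms of the simplicial volume. The real mathematical content has already been carried out (the strict positivity of $\|M\|$), so what remains is to check that the hypotheses of Gromov's estimate are met and then to rearrange a single inequality.

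First I would unwind the definition of $\text{Minvol}(M)$ as the infimum of the volumes $\text{Vol}(M,g)$ taken over all complete Riemannian metrics $g$ on $M$ whose sectional curvatures satisfy $|K_g|\leq 1$. Fixing such a competing metric $g$ and writing $\dim M=n$, the lower bound $K_g\geq -1$ forces, upon summing sectional curvatures over an orthonormal frame, the Ricci lower bound $\text{Ricci}_g \geq -(n-1)$. This is exactly the curvature hypothesis under which Gromov's estimate
$$\|M\| \leq (n-1)^n\, n!\cdot \text{Vol}(M,g)$$
applies. Since the left-hand side is a topological invariant not depending on $g$, I would then take the infimum of the right-hand side over all admissible metrics, obtaining
$$\|M\| \leq (n-1)^n\, n!\cdot \text{Minvol}(M).$$

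Second, I would invoke the main theorem, which gives $\|M\|>0$. Combining this with the displayed inequality yields
$$\text{Minvol}(M) \geq \frac{\|M\|}{(n-1)^n\, n!} > 0,$$
which is the asserted conclusion.

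There is essentially no obstacle internal to this argument: its two ingredients are Gromov's inequality, which is quoted, and the strict positivity of $\|M\|$, which is the substance of the preceding theorem. The only point requiring a moment's care is confirming that \emph{every} metric competing for the minimal volume automatically satisfies the Ricci hypothesis of Gromov's estimate; this holds because the two-sided bound $|K_g|\leq 1$ entails $K_g\geq -1$ and hence $\text{Ricci}_g\geq -(n-1)$. Granting this, the corollary follows by a single division, with no further analysis needed.
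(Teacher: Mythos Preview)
Your proposal is correct and follows essentially the same route as the paper: invoke Gromov's inequality $\|M\|\leq (n-1)^n n!\cdot \text{Minvol}(M)$, valid because the sectional curvature bound forces $\text{Ricci}\geq -(n-1)$, and combine it with the strict positivity of $\|M\|$ from the main theorem. If anything, your version is slightly more careful in applying Gromov's volume estimate to each competing metric and then passing to the infimum, whereas the paper simply quotes the inequality with $\text{Minvol}(M)$ already on the right-hand side.
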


\end{document}